\titleformat{\subsection}[runin]
{\bfseries} {\thesubsection{.}}{0.15cm}{}[.]
\titleformat{\subsubsection}[runin]
{\em}{\thesubsubsection{.}}{0.15cm}{}[.]
\newtheorem{theorem}{Theorem}[section]
\newtheorem{lemma}[theorem]{Lemma}
\theoremstyle{definition}
\newtheorem{problem}[theorem]{Problem}
\newtheorem{example}[theorem]{Example}
\numberwithin{equation}{section}
\numberwithin{figure}{section}
\newcommand\C{\mathbb{C}}
\newcommand\D{\overline{\mathbb D}}
\newcommand\CP{\mathbb{CP}}
\renewcommand\D{\mathbb D}
\newcommand\R{\mathbb{R}}
\newcommand\igot{\mathfrak{i}}
\renewcommand\igot{\mathfrak{i}}
\newcommand\ggot{\mathfrak{g}}
\renewcommand\imath{\igot}
\newcommand\di{\partial}
\begin{document}

\fancyhead[LO]{Zero-curvature point of  minimal graphs}
\fancyhead[RE]{ D. Kalaj}
\fancyhead[RO,LE]{\thepage}

\thispagestyle{empty}

\vspace*{1cm}
\begin{center}
{\bf\LARGE Zero-curvature points of minimal graphs}

\vspace*{0.5cm}

{\large\bf   David Kalaj}
\end{center}


\vspace*{1cm}

\begin{quote}
{\small
\noindent {\bf Abstract}\hspace*{0.1cm}
Motivated by a classical result of Finn and Osserman (1964), who proved that the Scherk surface over the square inscribed in the unit disk is extremal for the Gaussian curvature of the point $O$ (so-called \emph{centre}) of the minimal graphs above the center $0$  of unit  unit disk, provided the tangent plane is horizontal, we ask and answer to the question concerned the extremal of "second derivative" of the Gaussian curvature of such graphs provided that its curvature at $O$ is zero. We prove that the extremals are certain Scherk type minimal surfaces over the regular hexagon inscribed in the unit disk, provided that the Gaussian curvature vanishes and the tangent plane is horizontal at the centre.

\vspace*{0.2cm}

\noindent{\bf Keywords}\hspace*{0.1cm} conformal minimal surface, minimal graph, curvature

\vspace*{0.1cm}

\noindent{\bf MSC (2010):}\hspace*{0.1cm} 53A10, 32B15, 32E30, 32H02}

\vspace*{0.1cm}
\noindent{\bf Date: \today} 
\end{quote}

\vspace{0.2cm}


\section{Introduction}
\label{sec:intro}

Nonparametric minimal graph in $\mathbf{R}^3$ over a domain $D\subset \mathbf{C}\cong \mathbf{R}^2$ is  given by
$$S=\{(u,v,\mathbf{f}(u,v)):(u,v)\in D\},$$ where $\mathbf{f}$ is a solution of minimal surface equation: $$(1+\mathbf{f}_u^2)\mathbf{f}_{vv}-2\mathbf{f}_u\mathbf{f}_v\mathbf{f}_{uv}+(1+\mathbf{f}_v^2)\mathbf{f}_{uu}=0.$$

Let $M\subset \R^3=\C\times \R$ be a minimal graph lying over the unit disc $\D\subset \C$.
Let $x=(x_1,x_2,x_3):\D\to M$ be a conformal harmonic parameterization of $M$ with $x(0)=0$.
Its projection $(x_1,x_2):\D\to \D$ is a harmonic diffeomorphism of the disc which may be assumed
to preserve the orientation. Let $z$ be the complex variable in $\D$, and write
$x_1+\imath x_2 = f$ in the complex notation.
We denote by $f_z=\di f/\di z$ and $f_{\bar z}=\di f/\di \bar z$ the Wirtinger derivatives of $f$.
The function $\omega$ defined by
\begin{equation}\label{eq:omega}
	\overline{f_{\bar z}} = \omega f_z
\end{equation}
is called the {\em second Beltrami coefficient} of $f$, and the above is the
{\em second Beltrami coefficient} with $f$ as a solution.

Orientability of $f$ is equivalent to $J(f,z)=|f_z|^2-|f_{\bar z}|^2>0$, hence to
$|\omega|<1$ on $\D$. Furthermore, the function $\omega$ is holomorphic whenever
$f$ is harmonic and orientation preserving. (In general, it is meromorphic when $f$ is harmonic.)
To see this, let
\begin{equation}\label{eq:fhg}
	u+\imath v = f = h+\overline g
\end{equation}
be the canonical decomposition of the harmonic map $f:\D\to\D$,
where $h$ and $g$ are holomorphic functions on the disc. Then,
\begin{equation}\label{eq:omega2}
	f_z=h',\quad\ f_{\bar z}=\overline g_{\bar z}= \overline{g'}, \quad\
	\omega =  \overline{f_{\bar z}}/f_z = g'/h'.
\end{equation}
In particular, the second Beltrami coefficient $\omega$ equals the meromorphic function $g'/h'$
on $\D$. In our case we have $|\omega|<1$, so it is holomorphic map $\omega:\D\to\D$.

We now consider the Enneper--Weierstrass representation of the minimal graph
$\mathbf{w}=(u,v,t):\D \to M\subset \D\times \R$ over $f$, following Duren \cite[p.\ 183]{Duren2004}. We have
\begin{eqnarray*}
	u(z) &=& \Re f(z) = \Re \int_0^z \phi_1(\zeta)d\zeta \\
	v(z) &=& \Im f(z) = \Re \int_0^z \phi_2(\zeta)d\zeta \\
	t(z) &=& \Re \int_0^z \phi_3(\zeta)d\zeta
\end{eqnarray*}
where
\begin{eqnarray*}
	\phi_1 &=& 2(u)_z = 2(\Re f)_z = (h+\bar g + \bar h + g)_z = h'+g', \\
	\phi_2 &=& 2(v)_z = 2(\Im f)_z = \imath(\bar h+g - h -\bar g)_z = \imath(g'-h'), \\
	\phi_3 &=& 2(t)_z = \sqrt{-\phi_1^2-\phi_2^2} = \pm 2\imath \sqrt{h'g'}.
\end{eqnarray*}
The last equation follows from the identity $\phi_1^2+\phi_2^2+\phi_3^2=0$ which
is satisfied by the Enneper--Weierstrass datum $\phi=(\phi_1,\phi_2,\phi_3)=2\di \mathbf{w}$
of any conformal minimal (equivalently, conformal harmonic) immersion $\mathbf{w}:D\to\R^3$
from a conformal surface $D$. Let us introduce the notation $p=f_z$. We have that
\begin{equation}\label{eq:p}
	p = f_z = (\Re f)_z + \imath (\Im f)_z = \frac12(h'+g' + h'-g') = h'.
\end{equation}
By using also $\omega =  \overline{f_{\bar z}}/f_z = g'/h'$ (see \eqref{eq:omega2}), it follows that
\[
	\phi_1 = h'+g'=p(1+\omega),\quad \phi_2 = -\imath(h'-g')=-\imath p(1-\omega),\quad
	\phi_3 = \pm 2\imath p \sqrt{\omega}.
\]
From the formula for $\phi_3$ we infer that $\omega$ has a well-defined holomorphic square root:
\begin{equation}\label{eq:q}
	\omega = q^2,\qquad q:\D\to \D\ \ \text{holomorphic}.
\end{equation}
In terms of the Enepper--Weierstrass parameters $(p,q)$ given by \eqref{eq:p} and \eqref{eq:q} we obtain
\begin{equation}\label{eq:EW}
	\phi_1 = p(1+q^2),\quad \phi_2 = -\imath p(1-q^2),\quad
	\phi_3 = -2\imath p q.
\end{equation}
(The choice of sign in $\phi_3$ is a matter of convenience; since we have two choices of sign for
$q$ in \eqref{eq:q}, this does no cause any loss of generality.) Hence,
\[
	\mathbf{w}(z) = \left(\Re f(z), \Im f(z), \Im \int_0^z 2 p(t) q(t) dt \right),\quad z\in\D.
\]
Let $\ggot:\D\to\CP^1$ denote the Gauss map of the minimal graph $x$. It is defined up to
the choice of a stereographic projection of the unit sphere in $\R^3$ to $\R^2\times \{0\}\cong\C$.
We choose the projection from $(0,0,-1)$, which makes the plane $\R^2\times\{0\}$
with the upward orientation correspond to the origin $0\in \C\subset \CP^1$.
This choice of $\ggot$ is given by the formula
\begin{equation}\label{eq:Gaussmap}
	\ggot = \frac{\phi_1-\imath \phi_2}{\phi_3}=\frac{2pq^2}{-2\imath pq}= \imath q.
\end{equation}
(The formula used in \cite[Eq.\ (2.79)]{AlarconForstnericLopez2021}, which corresponds to the stereographic projection
from $(0,0,1)$, would give $\ggot=-\imath/q$.)

The curvature $\mathcal{K}$ of the minimal graph $M$ is expressed in terms of $(h,g,\omega)$ \eqref{eq:omega2}, and in terms
of the Enneper--Weierstrass parameters $(p,q)$, by
\begin{equation}\label{eq:curvatureformula}
	\mathcal{K}(f(z)) = - \frac{|\omega'|^2}{|h'g'|^2(1 + |\omega|)^4} = - \frac{4|q'|^2}{|p|^2(1 + |q|^2)^4},
\end{equation}
where $p=f_z$ and $\omega=q^2=\overline{f_{\bar z}}/f_z$. (See Duren \cite[p.\ 184]{Duren2004}.)

In order to motivate our problem let us recall  Heinz-Hopf-Finn--Osserman problem

\begin{problem}\label{probleme}
What is the supremum of $|\mathcal{K}(O)|$ evaluated at the point $O$ (which we will call \emph{centre}) above the center of the unit disk, over all minimal graphs lying over $\D$? Is
\begin{equation}\label{eq:FinnOsserman}
	|\mathcal{K}(O)|< \frac{\pi^2}{2}
\end{equation}
the precise upper bound?
\end{problem}

It was shown by Finn and Osserman \cite{FinnOsserman1964} in 1964 that the
upper bound in \eqref{eq:FinnOsserman} is indeed sharp if  $q(0)=0$, which means that
the tangent plane $T_0 M=\C\times\{0\}$ being horizontal (and hence $f$ is conformal at $0$).
Although there is no minimal graph lying over the whole unit disc $\D$ whose centre curvature equals
$\frac{\pi^2}{2}$, there is a sequence of minimal graphs whose centre curvatures converge to  $\frac{\pi^2}{2}$,
and the graphs converge to the Scherk surface lying over square inscribed into the unit disc.
The associated Beltrami coefficient of the Scherk surface is $\omega(z)=z^2$, with $q(z)=z$.
We refer to Duren \cite[p.\ 185]{Duren2004} for a survey of this subject.

So far the best inequality for graphs whose tangent planes are not horizontal has been given by R. Hall, who proved \cite{Hall1982} (1982)
\begin{equation}\label{eq:hall}
	|\mathcal{K}(O)|< \frac{16\pi^2}{27}
\end{equation}
He also in \cite{Hall1998} showed that the estimate \eqref{eq:hall} is not sharp by a very small numerical improvement.

In this paper we consider points of minimal surfaces that have zero Gaussian curvature. In that case, the gradient of the Gaussian curvature vanishes at that point, and we call those points stationary points of the Gaussian curvature. We will estimate the "second derivative" of Gaussian curvature at stationary points. \subsection{Zero Gaussian curvature points}
For a non-parametric surface $\zeta=\mathbf{f}(u,v)$, the Gaussian curvature of the surface at the point $\mathbf{w}=(u,v,\zeta)$ is given by $$\mathcal{K}(\mathbf{w}) = \frac{\mathbf{f}_{uu}\mathbf{f}_{vv}-\mathbf{f}_{uv}^2}{(1+\mathbf{f}_u^2+\mathbf{f}_v)^2}.$$ The aim of this paper is to study the behaviors of the surface near the the point $(u,v)$ of zero Gaussian curvature, i.e. near the point in which we have $$\mathbf{f}_{uu}\mathbf{f}_{vv}-\mathbf{f}_{uv}^2=0.$$ Notice that, it follows from \eqref{eq:curvatureformula} that the zero-points of a minimal surface are isolated.
We will show  in Lemma~\ref{mainlema} below that if $\mathcal{K}(0)=0$ then the following unrestricted limit exists $$\mathcal{K}''(O):=\lim_{w\to 0}\frac{\mathcal{K}(\mathbf{w})}{2(|w|^2+\left<\nabla \mathbf{f}(w),w\right>^2)}.$$

Now the following problem is natural
\begin{problem}\label{prob3}
What is the supremum of $|{\mathcal{K}''(O)}|$ over all minimal graphs lying over $\D$, provided that the Gaussian curvature is equal to zero in $0$.
\end{problem}

We first prove the following general result

\begin{theorem}\label{th:teo}
Let $S:\zeta=\mathbf{f}(u,v)$ be a minimal surface over the unit disk $\D$. Assume also at the point $O=(0,0,0)\in S$ the Gaussian curvature of $S$ vanishes. Then $$|\mathcal{K}''(O)|< \frac{256 \pi ^4}{729}.$$
\end{theorem}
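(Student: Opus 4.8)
The plan is to reduce the estimate on $\mathcal{K}''(O)$ to a pointwise estimate on the Enneper--Weierstrass data $(p,q)$ at the origin, and then to invoke the classical Schwarz--Pick type bounds. Since $\mathcal{K}(O)=0$, the curvature formula \eqref{eq:curvatureformula} forces $q'(0)=0$ (recall $p(0)=f_z(0)\neq 0$ because $f$ is an orientation-preserving diffeomorphism). Thus $q$ vanishes to order at least two at $0$ in the sense that $q(z)-q(0) = O(z^2)$; writing the Taylor expansion $q(z)=q(0)+\frac{1}{2}q''(0)z^2+\cdots$, the leading behaviour of $q'$ near $0$ is $q'(z) = q''(0) z + o(z)$. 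Substituting into \eqref{eq:curvatureformula} gives
\[
	\mathcal{K}(f(z)) = -\frac{4|q''(0)|^2\,|z|^2}{|p(0)|^2(1+|q(0)|^2)^4} + o(|z|^2).
\]
Matching this against the definition of $\mathcal{K}''(O)$ in Lemma~\ref{mainlema} — where the denominator $2(|w|^2+\langle\nabla\mathbf{f}(w),w\rangle^2)$ must be expanded in terms of $z$ and related to $|z|^2$ via the conformal factor $|p(0)|$ of the parameterization at $0$, using that the tangent plane at $O$ is not assumed horizontal — will yield a closed formula of the shape
\[
	|\mathcal{K}''(O)| = \frac{C\,|q''(0)|^2}{|p(0)|^{a}\,(1+|q(0)|^2)^{b}}
\]
for explicit constants $C,a,b$ coming from this expansion. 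This first step is essentially a bookkeeping computation, but one must be careful with the relation between $|w|^2+\langle\nabla\mathbf{f},w\rangle^2$ and $|z|^2$: this is where the factor $(1+|q(0)|^2)$ re-enters, since the first fundamental form of the graph at a point with Gauss map value $\imath q(0)$ scales by $|p|^2(1+|q(0)|^2)^2$.

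Next I would bound the three ingredients $|q''(0)|$, $|p(0)|$ and $|q(0)|$ against one another. The function $q:\D\to\D$ is holomorphic, so the coefficient inequalities for bounded holomorphic functions (equivalently, the second-order Schwarz--Pick inequality, or the Dieudonné/Schur estimates) give a sharp bound on $|q''(0)|$ in terms of $|q(0)|$: precisely $|q''(0)|/2 \le 1-|q(0)|^2$ when $q'(0)=0$. The remaining freedom is the value $|p(0)|$; but here I expect a constraint of the form $|p(0)| \ge 1 - |q(0)|^{2}$ (or a similar inequality) coming from the fact that the \emph{projection} $(x_1,x_2)=f$ maps $\D$ onto $\D$ as an orientation-preserving harmonic diffeomorphism fixing $0$, together with a Schwarz lemma for harmonic maps (e.g.\ Heinz's inequality $|f_z(0)|^2+|f_{\bar z}(0)|^2 \ge$ const, or the normalization used by Finn--Osserman/Hall). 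Combining the formula from Step 1 with these bounds reduces the whole problem to maximizing a one-variable function of $s=|q(0)|^2\in[0,1)$ of the form $\dfrac{C(1-s)^2}{(1-s)^{a}(1+s)^{b}}$, and a routine calculus optimization pins down the maximizer — which, given that the claimed extremal surface is Scherk's surface over the regular hexagon, I expect to occur at an interior value of $s$ corresponding to $q(z)=c z^2$ for a suitable constant, matching $\omega=q^2=c^2 z^4$ and hence the hexagonal Scherk geometry.

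The main obstacle will be Step 2 — obtaining the correct sharp lower bound on $|p(0)|=|h'(0)|$ for a harmonic diffeomorphism $f=h+\bar g$ of the disc onto itself with $f(0)=0$ and $g'/h'=q^2$. Unlike the holomorphic case, $h'(0)$ is not controlled by the Schwarz lemma directly; one needs the sharp form of the Schwarz lemma for planar harmonic mappings (the bound going back to Heinz, with the extremal being precisely the Scherk-type map), and one must check it interacts correctly with the constraint $g'(0)=q(0)^2 h'(0)$. If the naive bound is not sharp, the optimization in Step 3 will not reproduce the constant $\frac{256\pi^4}{729}$, and one would need the refined inequality that accounts for the full harmonic-map structure rather than treating $h$ and $g$ independently. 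Once the sharp normalization is in hand, the final inequality $|\mathcal{K}''(O)|<\frac{256\pi^4}{729}$ (strict, since no minimal graph over all of $\D$ realizes the extremal — only a limiting sequence converging to the hexagonal Scherk surface does) follows by the one-variable maximization, in complete parallel with the Finn--Osserman argument for $\frac{\pi^2}{2}$.
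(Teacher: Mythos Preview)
Your overall strategy is the paper's strategy: derive a closed formula for $|\mathcal{K}''(O)|$ in the Enneper--Weierstrass data, bound $|q''(0)|$ by Schwarz--Pick (using $q'(0)=0$), bound $|p(0)|$ from below by a Heinz/Hall-type inequality for harmonic self-maps of the disc, and then optimize. The formula the paper obtains is exactly
\[
	|\mathcal{K}''(O)| \;=\; \frac{4\,|q''(0)|^2}{|p(0)|^4\,(1+|q(0)|^2)^6},
\]
so your exponents are $C=4$, $a=4$, $b=6$; the derivation is the change-of-variables computation you outline, tracking how $|w|^2+\langle\nabla\mathbf f(w),w\rangle^2$ compares with $|z|^2$ via $f^{-1}$ and \eqref{terci}.

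Two corrections, however. First, the lower bound on $|p(0)|$ is not $|p(0)|\ge 1-|q(0)|^2$; the paper uses Hall's sharp inequality for harmonic diffeomorphisms of $\D$ onto itself fixing $0$, namely
\[
	|p(0)|^2 \;=\; |f_z(0)|^2 \;\ge\; \frac{27}{4\pi^2}\,\frac{1}{1+|q(0)|^4}.
\]
This is precisely the ``refined inequality that accounts for the full harmonic-map structure'' you anticipate needing; without it the constant $\tfrac{256\pi^4}{729}=\bigl(\tfrac{16\pi^2}{27}\bigr)^2$ will not appear.

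Second, your expectation about the extremal is off. Plugging $|q''(0)|\le 2(1-|q(0)|^2)$ and Hall's bound into the formula gives, with $s=|q(0)|^2$,
\[
	|\mathcal{K}''(O)| \;\le\; \frac{16(1-s)^2(1+s^2)^2}{(1+s)^6}\cdot\frac{(4\pi^2)^2}{27^2},
\]
and the elementary inequality $(1-s)(1+s^2)\le(1+s)^3$ shows the fraction is maximized at $s=0$, not at an interior point. So the bound $\tfrac{256\pi^4}{729}$ comes from the horizontal-tangent case $q(0)=0$, and it is \emph{not} sharp: the hexagonal Scherk surface gives only $\tfrac{16\pi^4}{81}<\tfrac{256\pi^4}{729}$. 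The sharp constant (under the extra hypothesis $q(0)=0$) is the content of Theorem~\ref{mainresult}, whose proof is a genuine Finn--Osserman comparison argument with the hexagonal Scherk surface --- quite different from the direct estimate used here.
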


Then we prove the following
\begin{theorem}[The main result]\label{mainresult}
Let $S=\{(u,v,\mathbf{f}(u,v)): (u,v)\in\D\}$ be a non-parametric minimal surface above the unit disk, with vanishing  Gaussian curvature $\mathcal{K}$   and with a horizontal tangent plane at the centre $O$. Then it exists $$-\mathcal{K}''(O):=\lim_{|w|\to 0}\frac{|\mathcal{K}(w)|}{|w|^2}$$ which is equal to
$2(\mathbf{f}^2_{uuu}+\mathbf{f}^2_{vvv})$
and there hold the sharp inequality
\begin{equation}\label{interesting}2(\mathbf{f}^2_{uuu}+\mathbf{f}^2_{vvv})< \frac{(2\pi)^4 }{3^4}. \end{equation} In other words, for every constant $C<\frac{(2\pi)^4 }{3^4}$, there is a minimal surface $S_1=\{(u,v,\mathbf{f}_1(u,v)): (u,v)\in\D\}$ over the unit disk with a zero curvature at the point above the center and horizontal tangent plane so that $-\mathcal{K}_1''(O)\ge C$. The equality in \eqref{interesting} is never attained.
\end{theorem}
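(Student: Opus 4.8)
The plan is to pass to the Enneper--Weierstrass data $(p,q)$ of the graph, express $\mathcal{K}''(O)$ through these data at the origin, and then bound the resulting quantity by combining two sharp estimates: a double Schwarz lemma for $q$, and a Heinz--Finn--Osserman type lower bound for the conformal factor of the harmonic projection $f$.

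First I would carry out the local analysis at $O$. Horizontality of the tangent plane means $\ggot(0)=0$, i.e.\ $q(0)=0$, so $f$ is conformal at $0$ and $f_{\bar z}(0)=0$; by \eqref{eq:curvatureformula} one has $\mathcal{K}(O)=-4|q'(0)|^2/|p(0)|^2$, so the hypothesis $\mathcal{K}(O)=0$ forces $q'(0)=0$. Writing $a:=\tfrac12 q''(0)$ and $p_0:=p(0)=f_z(0)\neq 0$, the expansions $q(z)=az^2+O(z^3)$ and $f(z)=p_0z+O(z^2)$ inserted in \eqref{eq:curvatureformula} give $|\mathcal{K}(f(z))|=16|a|^2|z|^2/|p_0|^2+o(|z|^2)$ and $|f(z)|=|p_0|\,|z|+o(|z|)$; hence the limit defining $\mathcal{K}''(O)$ exists, unrestrictedly, and $-\mathcal{K}''(O)=16|a|^2/|p_0|^4$. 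To identify this with the third derivatives of $\mathbf{f}$, note that horizontality together with $\mathcal{K}(O)=0$ forces every second derivative of $\mathbf{f}$ to vanish at $O$ (there the minimal surface equation reduces to $\mathbf{f}_{uu}+\mathbf{f}_{vv}=0$, and $\mathcal{K}(O)=0$ reads $\mathbf{f}_{uu}\mathbf{f}_{vv}=\mathbf{f}_{uv}^2$). Consequently the leading Taylor term of $\mathbf{f}$ at $O$ is a cubic that the minimal surface equation forces to be harmonic, say $\Re(c\,w^3)$ with $w=u+\imath v$ and $c\in\C$, so that $\mathbf{f}_{uuu}^2+\mathbf{f}_{vvv}^2=36|c|^2$; and comparing the two expansions of the height, $x_3(z)=\Im\int_0^z 2p(\zeta)q(\zeta)\,d\zeta=\tfrac23\Im(p_0az^3)+O(|z|^4)$ from the Weierstrass representation and $x_3(z)=\mathbf{f}(f(z))=\Re(c\,p_0^3z^3)+O(|z|^4)$, gives $|c|=2|a|/(3|p_0|^2)$. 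Hence $\mathbf{f}_{uuu}^2+\mathbf{f}_{vvv}^2=16|a|^2/|p_0|^4$ coincides with $-\mathcal{K}''(O)$, up to the normalising constant used in the statement.

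Next I would prove $-\mathcal{K}''(O)=16|a|^2/|p_0|^4<(2\pi)^4/3^4$ by estimating $|a|$ and $|p_0|$ separately. Since $q\colon\D\to\D$ is holomorphic and vanishes to order $2$ at $0$, two applications of Schwarz's lemma give $|q(z)|\le|z|^2$, so $|a|\le1$, with equality only for $q(z)=e^{\imath\theta}z^2$. The deeper ingredient is $|p_0|=|h'(0)|\ge 3/\pi$: here $f=h+\overline g\colon\D\to\D$ is an orientation-preserving harmonic diffeomorphism with $f(0)=0$, and $g'=q^2h'$ vanishes to order $\ge4$ at $0$, so its boundary correspondence $\gamma$ is a circle homeomorphism with $\widehat\gamma(0)=\widehat\gamma(-1)=\widehat\gamma(-2)=\widehat\gamma(-3)=\widehat\gamma(-4)=0$ (the last four encoding $g^{(j)}(0)=0$ for $j=1,\dots,4$), while $\widehat\gamma(1)=h'(0)$. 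One shows that \emph{every} such $\gamma$ satisfies $|\widehat\gamma(1)|\ge 3/\pi$: after a rotation, $\widehat\gamma(1)=\tfrac1{2\pi}\int_0^{2\pi}\cos(\Gamma(\theta)-\theta)\,d\theta$ with $\Gamma$ non-decreasing and $\Gamma(2\pi)=\Gamma(0)+2\pi$, and minimising this functional over such $\Gamma$ under the five linear side conditions $\int_0^{2\pi}e^{\imath(\Gamma(\theta)+k\theta)}\,d\theta=0$ ($k=0,\dots,4$) — by a rearrangement argument, equivalently by a Schwarz-lemma argument applied to a suitable auxiliary holomorphic function built from $h$, $g$ and the six ``corner directions'' — one finds the infimum is approached by the degenerate step correspondence that collapses six congruent arcs of $\partial\D$ onto the six sixth roots of unity, whose value is $\tfrac1{2\pi}\cdot6\cdot2\sin(\pi/6)=3/\pi$. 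This is the exact analogue, at vanishing order $4$, of the Finn--Osserman estimate $|h'(0)|\ge 2\sqrt2/\pi=\tfrac1{2\pi}\cdot4\cdot2\sin(\pi/4)$ at vanishing order $2$, and proving it with the sharp constant is the main obstacle. Granting it, $-\mathcal{K}''(O)\le16/(3/\pi)^4=(2\pi)^4/3^4$, and the inequality is strict: if $|a|<1$ this is clear, while $|a|=1$ forces $q(z)=e^{\imath\theta}z^2$ (so the dilatation $|\omega|$ tends to $1$ on $\partial\D$), and equality in the boundary estimate would force $\gamma$ to be the step map, i.e.\ $f(\D)$ a proper subdomain of $\D$, which is impossible.

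Finally I would establish sharpness. The extremal data $q(z)=z^2$ together with the step correspondence above is realised exactly by the Scherk-type minimal surface over the regular hexagon inscribed in $\D$, with alternating limits $+\infty$ and $-\infty$ along its six edges: its sixfold dihedral symmetry forces $\mathcal{K}=0$ and a horizontal tangent plane at the centre, and for it $|a|=1$, $|p_0|=3/\pi$, $-\mathcal{K}''(O)=(2\pi)^4/3^4$. Since that surface lies over the hexagon, not over $\D$, equality is not attained by any graph over $\D$; but solving the minimal surface equation over $\D$ with hexagonally symmetric boundary data taking the values $\pm N$ alternately on the six arcs of $\partial\D$ cut off by the edges of the inscribed hexagon produces, as $N\to\infty$, minimal graphs over $\D$ for which $\mathcal{K}(O)=0$ throughout (by symmetry) and which converge on compacta of the open hexagon to the Scherk surface above; their Weierstrass data, and hence $-\mathcal{K}''(O)$, converge to $(2\pi)^4/3^4$. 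This yields, for every $C<(2\pi)^4/3^4$, a minimal graph over $\D$ with $-\mathcal{K}''(O)\ge C$, and shows $(2\pi)^4/3^4$ is the exact, unattained supremum. Beyond the boundary estimate, the remaining point demanding care is this last convergence: pinning down the limit of the approximating graphs and controlling their Weierstrass parameters near $0$.
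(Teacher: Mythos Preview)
Your reduction to the Enneper--Weierstrass data and the identification of $-\mathcal{K}''(O)$ with $4|q''(0)|^2/|p(0)|^4$ is exactly what the paper does (formula \eqref{eq:secderiv}), and the link with the third derivatives of $\mathbf{f}$ is the content of Lemma~\ref{mainlema}. So the local analysis is fine.

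The genuine gap is the inequality $|p_0|=|h'(0)|\ge 3/\pi$. You state it, note that it is the order-four analogue of the Finn--Osserman constant, and gesture at a rearrangement/Fourier argument on the boundary correspondence under the five moment constraints $\widehat\gamma(0)=\widehat\gamma(-1)=\cdots=\widehat\gamma(-4)=0$; but you do not carry this out, and you yourself flag that ``proving it with the sharp constant is the main obstacle.'' Since the Schwarz bound $|a|\le 1$ is soft, the entire strength of \eqref{interesting} is concentrated in this single estimate, and nothing in your outline pins the constrained infimum at $3/\pi$ rather than something smaller. A rearrangement of the type you describe is delicate here because monotonicity of $\Gamma$ must be maintained while five linear side conditions are enforced simultaneously; no such sharp Heinz-type bound with these extra vanishing conditions is available in the literature you could cite.

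The paper sidesteps this problem entirely and takes a different route: it never estimates $|h'(0)|$. Instead it argues by contradiction, comparing with the hexagonal Scherk surface $S^\diamond$ of Example~\ref{ex:hexagon}. Assuming $|\mathcal{K}''(O)|\ge (2\pi)^4/3^4$, a dilation makes all derivatives of $\mathbf{f}^\ast$ and $\mathbf{f}^\diamond$ agree at $0$ through order three (this uses Lemma~\ref{mainlema} to match the full third jet once $\mathcal{K}''$ and the gradient are matched). Then Lemma~\ref{lema2} forces $F=\mathbf{f}^\ast-\mathbf{f}^\diamond$ to behave like $\Re\zeta^N$ with $N\ge 4$ near the origin, so $\{F\ne 0\}$ has at least $2N\ge 8$ local sectors there; a nodal-domain count over the hexagon, using that $\mathbf{f}^\diamond\to\pm\infty$ along each side together with the maximum principle (and Finn's version permitting isolated boundary points at the vertices), shows this is impossible. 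Your direct-estimate strategy would be attractive if the sharp $|h'(0)|\ge 3/\pi$ could be established, but as written that step is missing. For sharpness the paper also proceeds differently, producing approximants by solving the second Beltrami equation on $\D$ with dilatation $\omega_k=k^2z^4$ and letting $k\uparrow 1$, and invoking a uniqueness theorem to identify the limit with the hexagonal map, rather than using alternating $\pm N$ boundary data as you propose.
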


\section{Proof of results}
We start the proofs by the following lemma
\begin{lemma}\label{mainlema}
Let $\zeta=\mathbf{f}(u,v)$ be a solution of minimal surface equation. Assume also at the point $O=(0,0,0)$,
$$\mathcal{K}(\mathbf{\mathbf{w}})=\frac{\mathbf{f}_{uu} \mathbf{f}_{vv}-\mathbf{f}_{uv}^2}{(1+\mathbf{f}_u^2+\mathbf{f}_v^2)^2}=0.$$ Then $\nabla\mathbf{f}(0)=0$, and the following (unrestricted)  limit exists
$$\mathcal{K}''(O):=\lim_{w\to 0}\frac{\mathcal{K}(\mathbf{w})}{2(|w|^2+\left<\nabla \mathbf{f}(w),w\right>^2)}$$ and is equal to the function $$-\frac{\left(1+\mathbf{f}_{u}^2\right)^3\mathbf{f}_{vvv}^2 +2 \mathbf{f}_{v} \mathbf{f}_{vvv} \mathbf{f}_{u}
 \left(3(\mathbf{f}_u^2+\mathbf{f}_{v}^2)-\mathbf{f}_u^2\mathbf{f}_v^2\right)\mathbf{f}_{uuu}+\left(1+\mathbf{f}_{v}^2\right)^3 \mathbf{f}_{uuu}^2}{\left(\left(1+\mathbf{f}_{v}^2+\mathbf{f}_{u}^2\right)^2 \left(1+\mathbf{f}_{u}^2+\mathbf{f}_{v}^2 \left(1-3 \mathbf{f}_{u}^2\right)\right)^2\right)}$$ evaluated in $w=(0,0)$. Here $\mathbf{w}=(w,\mathbf{f}(w))$.
Further
$\mathbf{f}_{uu}=0$, $\mathbf{f}_{vv}=0$ and $\mathbf{f}_{uv}=0$ and $\mathbf{f}_{uuu}, \mathbf{f}_{uuv}, \mathbf{f}_{uvv}, \mathbf{f}_{vvv}$ are uniquely determined by $\mathcal{K}''(O)$, $\mathbf{f}_u$ and $\mathbf{f}_v$ up to their sign at that point $w$.

In particular if at $w$, $\mathbf{f}_u=0$ and $\mathbf{f}_v=0$, then   \begin{equation}\label{h0}\mathcal{K}''(O)=-2(\mathbf{f}^2_{uuu}(0)+\mathbf{f}^2_{vvv}(0)).\end{equation}
\end{lemma}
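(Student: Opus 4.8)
\textbf{Proof strategy for Lemma~\ref{mainlema}.}

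The plan is to work entirely with the explicit curvature formula for a minimal graph and Taylor-expand everything at the origin. First I would note that $\mathcal{K}(O)=0$ together with \eqref{eq:curvatureformula} forces $q'(0)=0$ (since $p=h'$ is non-vanishing for an immersion), and that $\mathcal{K}(O)=0$ means $\mathbf{f}_{uu}\mathbf{f}_{vv}-\mathbf{f}_{uv}^2=0$ at $0$. The minimal surface equation $(1+\mathbf{f}_u^2)\mathbf{f}_{vv}-2\mathbf{f}_u\mathbf{f}_v\mathbf{f}_{uv}+(1+\mathbf{f}_v^2)\mathbf{f}_{uu}=0$ is a second-order linear relation among $\mathbf{f}_{uu},\mathbf{f}_{uv},\mathbf{f}_{vv}$ with positive-definite coefficient matrix $\bigl(\begin{smallmatrix}1+\mathbf{f}_v^2 & -\mathbf{f}_u\mathbf{f}_v\\ -\mathbf{f}_u\mathbf{f}_v & 1+\mathbf{f}_u^2\end{smallmatrix}\bigr)$; combining this with the degenerate-Hessian condition $\mathbf{f}_{uu}\mathbf{f}_{vv}=\mathbf{f}_{uv}^2$ I would show the only solution is $\mathbf{f}_{uu}=\mathbf{f}_{uv}=\mathbf{f}_{vv}=0$ at $0$. (Indeed, a rank-$\le 1$ symmetric matrix $A$ with $\langle B^{-1}\text{(something)}\rangle$... concretely: if the Hessian $H$ has rank $\le1$ then $H=\pm\bv\bv^{T}$ for some vector $\bv$, and plugging into the minimal equation gives $\bv^{T}\!\left(\begin{smallmatrix}1+\mathbf{f}_v^2 & -\mathbf{f}_u\mathbf{f}_v\\ -\mathbf{f}_u\mathbf{f}_v & 1+\mathbf{f}_u^2\end{smallmatrix}\right)\!\bv=0$, impossible unless $\bv=0$ because that matrix is positive definite.) The claim $\nabla\mathbf{f}(0)=0$ then follows from the remark after \eqref{eq:curvatureformula}: a horizontal tangent plane corresponds to $q(0)=0$, but for the general statement one only needs that $\mathbf{f}_{uu}(0)=\mathbf{f}_{vv}(0)=\mathbf{f}_{uv}(0)=0$ — wait, this does not by itself give $\nabla\mathbf{f}(0)=0$. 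Rather, I would argue as follows: the function $\mathbf{f}_{uu}\mathbf{f}_{vv}-\mathbf{f}_{uv}^2=-(1+\mathbf{f}_u^2+\mathbf{f}_v^2)^2\,\mathcal{K}$, and since $\mathcal{K}\le 0$ on any minimal surface, the second-order term of $\mathbf{f}$ at $0$ vanishes; so the lowest-order deviation of the graph from its tangent plane is of order $\ge 3$, and one computes directly that $\langle\nabla\mathbf{f}(w),w\rangle$ also vanishes to second order, which is what makes the normalizing denominator $2(|w|^2+\langle\nabla\mathbf{f}(w),w\rangle^2)$ behave like $2|w|^2$. Hmm — on reflection, the assertion "$\nabla\mathbf{f}(0)=0$" in the lemma is presumably a standing hypothesis that the parameterization is centered so that $x(0)=0$ and that we are at the \emph{centre}; I would simply invoke this together with the horizontal-tangent normalization where needed, and otherwise carry $\mathbf{f}_u,\mathbf{f}_v$ as free parameters as the displayed formula does.

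Next, the core computation. Write $w=u+\imath v$ and parameterize by $z\in\D$ via the Enneper–Weierstrass data $(p,q)$ from \eqref{eq:p}–\eqref{eq:q}. Since $q'(0)=0$ and $p(0)\ne0$, expand $q(z)=q(0)+\tfrac12 q''(0)z^2+O(z^3)$ and $p(z)=p(0)+O(z)$. The numerator $4|q'|^2$ in \eqref{eq:curvatureformula} is then $4|q''(0)|^2|z|^2+O(|z|^3)$, while the denominator $|p|^2(1+|q|^2)^4\to|p(0)|^2(1+|q(0)|^2)^4$. Meanwhile $f=x_1+\imath x_2$ satisfies $f_z=p$, so $f(z)=f(0)+p(0)z+\overline{q(0)^2 p(0)}\,\bar z+O(|z|^2)$ — wait, $f_{\bar z}=\overline{g'}=\overline{q^2 p}$ at... let me just say: the leading behavior of $w=f(z)$ in terms of $z$ is linear, $w\sim p(0)z+\overline{p(0)}\,\overline{q(0)}^2\bar z$, which is a real-linear isomorphism (its Jacobian is $|p(0)|^2(1-|q(0)|^4)>0$). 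Inverting, $|z|^2\sim c\,|w|^2$ along with a correction involving the direction of $w$, and similarly $\langle\nabla\mathbf{f}(w),w\rangle$ is computed from $t_z=\Im(\phi_3/2)=-\Im(\imath pq)$ evaluated to the appropriate order. Plugging these expansions into $\mathcal{K}(\mathbf{w})/\bigl(2(|w|^2+\langle\nabla\mathbf{f}(w),w\rangle^2)\bigr)$ and taking $w\to 0$ gives a limit independent of the direction of approach — this is the "unrestricted limit exists" claim — and one reads off the value. To match it with the stated rational expression in $\mathbf{f}_u,\mathbf{f}_v,\mathbf{f}_{uuu},\mathbf{f}_{vvv}$, I would translate the $(p,q)$-data at $0$ into the partial derivatives of $\mathbf{f}$: from $\zeta=\mathbf{f}(u,v)$ with $(u,v)=(\Re f,\Im f)$ and $t=\mathbf{f}\circ(u,v)$, the chain rule at $0$ expresses $\mathbf{f}_u,\mathbf{f}_v$ via $t_z,f_z$ (i.e. via $q(0),p(0)$) and $\mathbf{f}_{uuu},\mathbf{f}_{uuv},\mathbf{f}_{uvv},\mathbf{f}_{vvv}$ via the second-order data $q''(0),p'(0)$ together with the already-known first-order data. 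The minimal surface equation plus the third-derivative consequence of $\mathbf{f}_{uu}=\mathbf{f}_{uv}=\mathbf{f}_{vv}\equiv$ (vanishing to the right order along the surface) pin down $\mathbf{f}_{uuv},\mathbf{f}_{uvv}$ in terms of $\mathbf{f}_{uuu},\mathbf{f}_{vvv}$ — this is the "uniquely determined up to sign" assertion.

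Finally, the special case $\mathbf{f}_u(0)=\mathbf{f}_v(0)=0$: this is exactly $q(0)=0$ (horizontal tangent plane), so the general rational expression collapses. With $\mathbf{f}_u=\mathbf{f}_v=0$ the formula in the statement reduces to $-\bigl(\mathbf{f}_{vvv}^2+\mathbf{f}_{uuu}^2\bigr)/1^2$ — but the claimed value \eqref{h0} is $-2(\mathbf{f}_{uuu}^2+\mathbf{f}_{vvv}^2)$, so the factor $2$ must come from the normalization $2(|w|^2+\cdots)$ versus the way the displayed function is written; I would check this bookkeeping carefully. Concretely, when $q(0)=0$ one has $w=p(0)z+O(|z|^2)$ so $|w|^2\sim|p(0)|^2|z|^2$ and $\langle\nabla\mathbf{f}(w),w\rangle=O(|w|^2)$ is negligible, whence $\mathcal{K}''(O)=\lim \mathcal{K}/(2|w|^2)=\lim 4|q'(z)|^2/\bigl(2|p|^2\cdot|p(0)|^2|z|^2\bigr)\cdot(1)=2|q''(0)|^2/|p(0)|^4$, and then expressing $q''(0),p(0)$ through $\mathbf{f}_{uuu}(0),\mathbf{f}_{vvv}(0)$ (using $\phi_3=-2\imath pq$, so the third-order part of $t$ is governed by $p(0)q''(0)$, hence $\mathbf{f}_{uuu}+\imath(\text{something})$, ...) yields precisely $2(\mathbf{f}_{uuu}^2+\mathbf{f}_{vvv}^2)$.

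\textbf{Main obstacle.} The genuine difficulty is the algebra of passing from the Enneper–Weierstrass expansion in $z$ to the intrinsic graph-derivative expression in $(u,v)$: one must invert the real-linear map $z\mapsto w$ to second order, keep track of the direction-dependent $O(|w|)$ corrections carefully enough to see they cancel in the limit (so that the limit is genuinely unrestricted), and then re-express $q(0),q''(0),p(0),p'(0)$ in terms of $\mathbf{f}_u,\mathbf{f}_v,\mathbf{f}_{uuu},\mathbf{f}_{vvv}$ using the chain rule for the third derivatives of a composition — a bookkeeping-heavy computation where the minimal surface equation must be used to eliminate the "mixed" third derivatives. Everything else (the vanishing of the Hessian from the sign of $\mathcal{K}$, the isolation of zero-curvature points, the collapse in the horizontal case) is short.
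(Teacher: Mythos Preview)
Your route is viable but genuinely different from the paper's. The paper stays entirely in the nonparametric $(u,v)$-variables: after showing $\mathbf{f}_{uu}=\mathbf{f}_{uv}=\mathbf{f}_{vv}=0$ at $0$ (your rank-one/positive-definite-trace argument is slicker than the paper's sum-of-squares version), it Taylor-expands $\mathcal{K}$ itself to second order, writes the Hessian of $\mathcal{K}$ directly in terms of $\mathbf{f}_{uuu},\mathbf{f}_{uuv},\mathbf{f}_{uvv},\mathbf{f}_{vvv}$, differentiates the minimal surface equation once in $u$ and once in $v$ to solve for $\mathbf{f}_{uuv},\mathbf{f}_{uvv}$ in terms of $\mathbf{f}_{uuu},\mathbf{f}_{vvv},\mathbf{f}_u,\mathbf{f}_v$, and then substitutes to see an explicit factorization $\langle\mathcal{H}[\mathcal{K}](O)e^{it},e^{it}\rangle = -2\bigl(1+\langle\nabla\mathbf{f}(0),e^{it}\rangle^2\bigr)\cdot C$ with $C$ the displayed rational expression. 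The direction-independence is thus a one-line algebraic identity in $(u,v)$-coordinates, not an inversion of $z\mapsto w$. Your Enneper--Weierstrass strategy is essentially what the paper does \emph{later}, in the proof of Theorem~\ref{th:teo}, where the analogous cancellation is the identity $|1-q(0)^2k^2|^2/(1+\langle\nabla\mathbf{f}(0),k\rangle^2)=(1-|q(0)|^2)^2$; so your approach front-loads the formula $4|q''(0)|^2/\bigl(|p(0)|^4(1+|q(0)|^2)^6\bigr)$ but then owes the chain-rule dictionary between $(p(0),q(0),q''(0))$ and the $\mathbf{f}$-partials to match the lemma's stated expression --- exactly the bookkeeping you flag. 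Either way the differentiated minimal surface equation is the mechanism that eliminates the mixed third derivatives. Finally, your hesitation over ``$\nabla\mathbf{f}(0)=0$'' is well founded: the paper's proof never establishes this and instead proves $\nabla\mathcal{K}(O)=0$, which is almost certainly the intended assertion; you were right to carry $\mathbf{f}_u,\mathbf{f}_v$ as free parameters.
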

\begin{proof}
If $\mathcal{K}(O)=0$ then at $w=(0,0)$ we have $$\mathbf{f}_{uu} \mathbf{f}_{vv}-\mathbf{f}_{uv}^2=0,$$ and $$\mathbf{f}_{uv}=\sqrt{\mathbf{f}_{uu} \mathbf{f}_{vv}}.$$ Moreover  $\mathbf{f}_{uu}$ and $\mathbf{f}_{vv}$ have the same sign. Assume, without loosing of generality  that both are non-negative.
Then we use the equation
$$\mathbf{f}_{uu} (1+\mathbf{f}_v^2)-2 \mathbf{f}_u \mathbf{f}_v \mathbf{f}_{uv}+\mathbf{f}_{vv} (1+\mathbf{f}_u^2)=0,$$ which becomes
$$\mathbf{f}_{uu}+\mathbf{f}_{vv}+(\sqrt{\mathbf{f}_{uu}}\mathbf{f}_v-\sqrt{\mathbf{f}_{vv}}\mathbf{f}_u)^2=0.$$
 Therefore
\begin{equation}\label{seconde}\mathbf{f}_{uu}(0)=\mathbf{f}_{vv}(0)=\mathbf{f}_{uv}(0)=0.\end{equation}

Now for $\mathbf{w}=(u,v,\mathbf{f}(u,v))=(w,\mathbf{f}(w))$ we have $$\mathcal{K}(\mathbf{w}) = \mathcal{K}(O)+\left<\nabla \mathcal{K}(O),w\right> +\frac{1}{2}\left<\mathcal{H}(O)w,w\right>+o(|w|^2).$$

Here
$$\mathcal{\mathcal{H}}=\left(
                                                                                                                                   \begin{array}{cc}
                                                                                                                                      \mathcal{K}_{uu} &  \mathcal{K}_{uv} \\
                                                                                                                                      \mathcal{K}_{uv} &  \mathcal{K}_{vv} \\
                                                                                                                                   \end{array}
                                                                                                                                  \right)$$

is the Hessian matrix of $\mathcal{K}$.
So

$$\mathcal{H}=\left(
                \begin{array}{cc}
                  \frac{-2 \mathbf{f}_{uuv}^2+2 \mathbf{f}_{uvv} \mathbf{f}_{uuu}}{\left(1+\mathbf{f}_v^2+\mathbf{f}_u^2\right)^2} & \frac{-\mathbf{f}_{uvv} \mathbf{f}_{uuv}+\mathbf{f}_{vvv} \mathbf{f}_{uuu}}{\left(1+\mathbf{f}_v^2+\mathbf{f}_u^2\right)^2} \\
                  \frac{-\mathbf{f}_{uvv} \mathbf{f}_{uuv}+\mathbf{f}_{vvv} \mathbf{f}_{uuu}}{\left(1+\mathbf{f}_v^2+\mathbf{f}_u^2\right)^2} & \frac{-2 \mathbf{f}_{uvv}^2+2 \mathbf{f}_{vvv} \mathbf{f}_{uuv}}{\left(1+\mathbf{f}_v^2+\mathbf{f}_u^2\right)^2}  \\
                \end{array}
              \right)$$

Because of \eqref{seconde}, we obtain that $\nabla \mathcal{K}(O)=0$.
In order to prove \eqref{h0}, for $w=r e^{it}$ we have  $$Q(t)=-\lim_{r\to 0}\frac{\mathcal{K}(\mathbf{w})}{|w|^2}=\left<\mathcal{\mathcal{H}}[\mathcal{K}](O)e^{it}, e^{it} \right>.$$
We need to show that $$R(t)=\frac{Q(t)}{1+\left<\nabla \mathbf{f}(0),e^{it} \right>^2}$$ does not depend on $t$.

Thus, by \eqref{seconde},
\begin{equation}\label{miami}Q(t)=\frac{\mathcal{A}+\mathcal{B}}{\left(1+\mathbf{f}_{v}^2+\mathbf{f}_{u}^2\right)^2}\end{equation}
where $$\mathcal{A}=-2 \sin^2 t \mathbf{f}_{uvv}^2+2 \sin t \left(\sin t \mathbf{f}_{vvv}-\cos t \mathbf{f}_{uvv}\right) \mathbf{f}_{vvu}$$ and
$$\mathcal{B}=-2 \cos^2 t \mathbf{f}_{vvu}^2+2 \cos t \left(\sin t \mathbf{f}_{vvv}+\cos t \mathbf{f}_{uvv}\right) \mathbf{f}_{uuu}.$$
By differentiating w.r.t. $u$ and $v$ the minimal surface equation we get
$$-2 \mathbf{f}_u \mathbf{f}_{uv}^2+\left(1+\mathbf{f}_u^2\right) \mathbf{f}_{uvv}+2 \mathbf{f}_{vv} \mathbf{f}_u \mathbf{f}_{uu}-2 \mathbf{f}_v \mathbf{f}_u \mathbf{f}_{uuv}+\left(1+\mathbf{f}_v^2\right) \mathbf{f}_{uuu}=0,$$
$$\mathbf{f}_{vvv} \left(1+\mathbf{f}_u^2\right)-2 \mathbf{f}_v \mathbf{f}_{uv}^2-2 \mathbf{f}_v \mathbf{f}_u \mathbf{f}_{uvv}+2 \mathbf{f}_v \mathbf{f}_{vv} \mathbf{f}_{uu}+\left(1+\mathbf{f}_v^2\right) \mathbf{f}_{uuv}=0.$$
Therefore
\begin{equation}\label{eq:uuu}\mathbf{f}_{uvv}=\frac{2 \mathbf{f}_v \mathbf{f}_{vvv} \left(\mathbf{f}_u+\mathbf{f}_u^3\right)+\left(1+\mathbf{f}_v^2\right)^2 \mathbf{f}_{uuu}}{-1-\mathbf{f}_u^2+\mathbf{f}_v^2 \left(-1+3 \mathbf{f}_u^2\right)}\end{equation}

and \begin{equation}\label{eq:vvv}\mathbf{f}_{uuv}=\frac{\mathbf{f}_{vvv} \left(1+\mathbf{f}_u^2\right)^2+2 \left(\mathbf{f}_v+\mathbf{f}_v^3\right) \mathbf{f}_u \mathbf{f}_{uuu}}{-1-\mathbf{f}_u^2+\mathbf{f}_v^2 \left(-1+3 \mathbf{f}_u^2\right)}.\end{equation}
Then inserting to \eqref{miami} we get
$$Q(t)=\frac{Y(t) \left(\mathbf{f}_{vvv}^2 \left(1+\mathbf{f}_{u}^2\right)^3+2 \mathbf{f}_{v} \mathbf{f}_{vvv} \mathbf{f}_{u}
\left(\mathbf{f}_{v}^2 \left(3-\mathbf{f}_{u}^2\right)+3 \left(1+\mathbf{f}_{u}^2\right)\right) \mathbf{f}_{uuu}+\left(1+\mathbf{f}_{v}^2\right)^3 \mathbf{f}_{uuu}^2\right)}{\left(\left(1+\mathbf{f}_{v}^2+\mathbf{f}_{u}^2\right)^2 \left(1+\mathbf{f}_{u}^2+\mathbf{f}_{v}^2 \left(1-3 \mathbf{f}_{u}^2\right)\right)^2\right)},$$
where
$$Y(t)=-2\left(1+\sin^2 t \mathbf{f}_{v}^2+ \sin (2t)  \mathbf{f}_{u}  \mathbf{f}_{v}+\cos^2 t \mathbf{f}_{u}^2\right)=-2(1+\left<\nabla\mathbf{f}(0),e^{it}\right>^2).$$
Since $$Y'(t)=-4 \cos(2t)\mathbf{f}_{v} \mathbf{f}_{u}+2 \sin (2t) \left(-\mathbf{f}_{v}^2+\mathbf{f}_{u}^2\right)$$ we get that $Y'(t)=0$ if and only if $\mathbf{f}_{v}=\mathbf{f}_{u}=0$.
In this particular case we get
$$-Q(0)=2(\mathbf{f}^2_{uuu}+\mathbf{f}^2_{vvv}).$$
\end{proof}

\subsection{Proof of Theorem~\ref{th:teo}}
The proof is based on Lemma~\ref{mainlema} and R. Hall sharp result best lower bound of the gradient of a harmonic diffeomorphism of the unit disk onto itself \cite{Hall1982}.
\begin{proof}[Proof of Theorem~\ref{th:teo}]

Assume that $w=f(z)=u(z)+\imath v(z)$ is the harmonic diffeomorphisms that produces the Enneper-Weisstrass parameterization so that $f(0)=0$.

The unit normal at $\mathbf{w}=(u,v,t)\in S$, in view of \cite[p.~169]{Duren2004}  is given by $$\mathbf{n}_{\mathbf{w}}=-\frac{1}{1+|q(z)|^2}(2\Im q(z), 2\Re q(z), -1+|q(z)|^2).$$ It is also given by the formula
$$\mathbf{n}_{\mathbf{w}}=\frac{1}{\sqrt{1+\mathbf{f}_u^2+\mathbf{f}_v^2}}\left(-\mathbf{f}_u,-\mathbf{f}_v,1\right).$$  Then we have the relations
 \begin{equation}\label{firsti}\mathbf{f}_v (u(x,y),v(x,y))=\frac{2 a(x,y)}{-1+a(x,y)^2+b(x,y)^2}
  \end{equation}
  \begin{equation}\label{secondi}\mathbf{f}_u(u(x,y),v(x,y))=\frac{2 b(x,y)}{-1+a(x,y)^2+b(x,y)^2},\end{equation} where $a=\Re q$, $b=\Im q$.
 So   \begin{equation}\label{terci}\nabla\mathbf{f}(u(x,y),v(x,y))=\frac{-2\imath \overline{q(z)}}{1-|q(z)|^2}\end{equation}
and
$$\mathcal{K}''(O)=\lim_{r\to 0}\frac{\mathcal{K}(rh)}{-2r^2(1+\left<\nabla \mathbf{f}(rk),k\right>^2)}.$$
Recall that in the Enneper-Weiersstrass parameters the curvature can be expressed as $$-\mathcal{K}(\mathbf{w}(z))=\frac{4|q'|^2}{|p|^2(1 + |q|^2)^4},$$ where $\mathbf{w}(z)=(f(z),t(z))\in S$.
 So
 \[
 \begin{split}
 \mathcal{K}''(O)&=\lim_{r\to 0}\frac{-\mathcal{K}(rk)}{|rk|^2(1+\left<\nabla \mathbf{f}(rk),k\right>^2)}
 \\&=\lim_{r\to 0}\frac{-\mathcal{K}(rk)}{|f^{-1}(rk)|^2}\frac{{|f^{-1}(rk)|^2}}{|rk|(1+\left<\nabla \mathbf{f}(rk),k\right>^2)}
 \\&=\frac{4|q''(0)|^2}{|p(0)|^2(1+|q(0)|^2)^4}\lim_{r\to 0}\frac{M^2(hr)}{(1+\left<\nabla \mathbf{f}(rk),k\right>^2)},
 \end{split}
 \]
   where
 $$M(kr)=\frac{1}{J(f,z)}|\overline{g'}k-\overline{h'}\overline{k}|=\frac{1}{|g'(z)|^2(1-|q(z)|^4)}|g'(z)|\cdot |1-q(z)^2 k^2|.$$ Here $z=f^{-1}(rk)$ and $J(f,z)$ is the Jacobian. Thus

 $$\lim_{r\to 0}M(kr)^2=\frac{|1-q(0)^2 k^2|^2}{|p(0)|^2(1-|q(0)|^4)^2}.$$
 Further
 \[\begin{split}\frac{|1-q(0)^2 k^2|^2}{(1+\left<\nabla \mathbf{f}(0),k\right>^2)}
&=\frac{|1-q(0)^2 k^2|^2}{1+\left<\frac{-2\imath \overline{q(0)}}{1-|q(0)|^2},k\right>^2} = (1-|q(0)|^2)^2.\end{split}\]
Thus \begin{equation}\label{eq:secderiv}\mathcal{K}''(O)=\frac{4|q''(0)|^2(1-|q(0)|^2)^2}{|p(0)|^4(1+|q(0)|^2)^4(1-|q(0)|^4)^2} =\frac{4|q''(0)|^2}{|p(0)|^4(1+|q(0)|^2)^6}.\end{equation}
Moreover, since $|q(z)|\le 1$, and $q'(0)=0$, we get $$\left|\frac{q(z)-q(0)}{1-q(z)\overline{q(0)}}\right|\le |z|^2$$ which implies that $|q''(0)|\le 2(1-|q(0)|^2)$.
Observe that $p(0)=f_z(0)$, and so by Hall's result (\cite{Hall1982}) $$|p(0)|^2\ge \frac{27}{4\pi^2}\frac{1}{1+|q(0)|^4}.$$
Therefore $$\mathcal{K}''(O)\le \frac{16(1-|q(0)|^2)^2}{|p(0)|^4(1+|q(0)|^2)^6}\le \frac{16\cdot (4\pi^2)^2}{27^2}.$$
Thus \begin{equation}\label{baca}\mathcal{K}''(O)\le \frac{256 \pi ^4}{729},\end{equation} what we wanted to prove.

\end{proof}

The following example produces the generalized  Scherk type surface for the second derivative of the Gaussian curvature above the center of the unit disk. We will show that it is extremal for the second derivative of Gaussian curvature (see the proof of Theorem~\ref{mainresult}).

\begin{example}\label{ex:hexagon}
Let $$g(z) = \frac{3 z F\left[\frac{1}{6},1,\frac{7}{6},-z^6\right]}{\pi }$$ and $$h(z) = \frac{3 z^5 F\left[\frac{5}{6},1,\frac{11}{6},-z^6\right]}{5 \pi }$$ where $F$ is the Gauss Hypergeometric function. Then
\begin{equation}\label{hexa}
f(z) = g(z) + \overline{h(z)}
\end{equation}
is a harmonic diffeomorphism of the unit disk onto the regular hexagon $\mathcal{G}$ inscribed in the unit disk.
Then $$p(z) =g'(z) = \frac{3}{\pi +\pi  z^6}$$ and $$q(z) = \sqrt{\frac{h'(z)}{g'(z)}}=z^2.$$

Moreover $f(z) = g(z) +\overline{h(z)}$ maps the unit disk onto the regular hexagon that defines a $6-$sides Scherk surface. See Figure~2.1.

The third coordinate of conformal harmonic parametrization is given by $$t(z) = \frac{\log\left[\frac{1+r^6-2 r^3 \sin(3s)}{1+r^6+2 r^3 \sin(3s)}\right]}{2 \pi }, z=r e^{is}.$$

\begin{figure}[htp]\label{f1}
\centering
\includegraphics{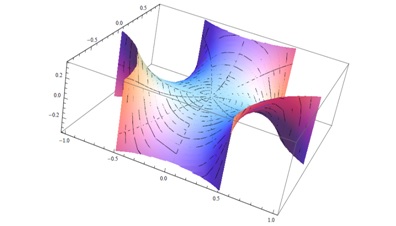}
\caption{A generalized Scherk surface over the hexagon.}
\end{figure}

See also the paper of Duren and Thygerson \cite{DurenThy} for additional details for those generalized Scherk surfaces.

Further \begin{equation}\label{eq:ine}|\mathcal{K}''(O)|=\frac{|q''(0)|^2}{|p(0)|^4}=\frac{16 \pi ^4}{81}.\end{equation} Observe that $\frac{16 \pi ^4}{81}<\frac{256 \pi ^4}{729}$ (see  \eqref{baca}).

\subsection{The proof of Theorem~\ref{mainresult}}
We need the following lemma.
\end{example}
\begin{lemma}\label{lema2}\cite[Lemma~1]{FinnOsserman1964}
 Let $\phi_1$, $\phi_2$ be any two distinct solutions of the minimal
surface equation such that $\phi_1$ and $\phi_2$, together with all their derivatives of
order up to and including $n$, coincide at some point $a$. Then a neighborhood
$D=D_a$ of that point $a$ may be mapped homeomorphically onto a neighborhood of $0$ in the
complex $\zeta-$ plane in such a way that the function $\phi=\phi_1-\phi_2$,  will be of the form
$$\phi(z)= \Re\left[\zeta(z)^N\right]$$ for some $N \ge  n + 1$, $z\in D_a$
\end{lemma}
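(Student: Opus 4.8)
The plan is to exploit that, although the minimal surface equation is quasilinear, the \emph{difference} of two of its solutions satisfies a genuinely \emph{linear} uniformly elliptic equation, and then to put that equation into complex-analytic normal form, where the conclusion becomes the statement that a pseudoanalytic germ with an isolated critical point is, after extracting an $N$-th root, modelled on $\zeta^N$. Write the minimal surface operator in divergence form $\operatorname{div} F(\nabla w)=0$ with flux $F(\xi)=\xi/\sqrt{1+|\xi|^2}=\nabla_\xi\sqrt{1+|\xi|^2}$, and set $\phi=\phi_1-\phi_2$. Subtracting the two equations and applying the fundamental theorem of calculus along the segment $s\mapsto\nabla\phi_2+s\nabla\phi$ gives $\operatorname{div}(A\nabla\phi)=0$ on a small disc $D_a$ about $a$, where $A(x)=\int_0^1 (\Hess\sqrt{1+|\xi|^2})\big(\nabla\phi_2+s\nabla\phi\big)\,ds$ is symmetric, positive definite (strict convexity of the area integrand), real analytic (since $\phi_1,\phi_2$ are), and uniformly elliptic on $D_a$ once it is small enough that the gradients are bounded.

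Next I would pass to complex-analytic form. On the simply connected $D_a$ the field $A\nabla\phi$ is divergence free, so it is the rotated gradient of a stream function $\psi$; with $\psi(a)=0$ the function $\psi$ is again real analytic and the complex potential $\Phi=\phi+\imath\psi$ solves a uniformly elliptic Beltrami-type (Carleman--Bers--Vekua) system whose coefficients are explicit real-analytic functions of the entries of $A$. The heart of the matter is the local behaviour of $\Phi$ near $a$. Applying the Bers--Vekua similarity principle to the complex derivative $W=\Phi_z$, which satisfies a first-order equation $W_{\bar z}=\alpha W+\beta\overline W$, one writes $W=e^{s}F$ with $s$ bounded continuous and $F$ holomorphic; since $e^{s}$ is bounded above and below, $F$ has a zero at $a$ of \emph{exactly} the order to which $\nabla\phi$ (equivalently $W$) vanishes. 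Because $\phi_1,\phi_2$ agree to order $n$, the nonzero analytic function $\phi$ vanishes to some finite order $N\ge n+1$, so $\nabla\phi$, hence $W$, vanishes to order $N-1$, and $F$ has a zero of order $N-1$. This forces $\Phi$ to be a topological branched cover of degree $N$ near $a$: the $2N$ separatrices $\{\phi=0\}$ through $a$ partition a punctured neighbourhood into $2N$ sectors on which $\phi$ alternates in sign, exactly as for $\Re[\zeta^N]$.

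With this local model in hand I would define the coordinate directly by extracting a root. Since $\Phi$ is an $N$-fold branched-covering germ vanishing to order $N$ at $a$, a branch of $\zeta:=\Phi^{1/N}$ is well defined and is a homeomorphism of a neighbourhood $D_a$ of $a$ onto a neighbourhood of $0$; by construction $\zeta(z)^N=\Phi(z)$, whence $\phi(z)=\Re\,\Phi(z)=\Re\big[\zeta(z)^N\big]$ with $N\ge n+1$, which is the assertion. Equivalently one may use the Stoilow factorization $\Phi=\chi\circ\eta$ with $\eta$ quasiconformal and $\chi$ holomorphic having a zero of order $N$, and then absorb the holomorphic unit: writing $\chi(\eta)=c\,\eta^N e^{N\rho(\eta)}$ one sets $\zeta=c^{1/N}e^{\rho(\eta)}\eta$.

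The main obstacle is precisely the local-structure step: establishing with full rigour that the isolated critical point of $\Phi$ has the clean order-$N$ branched-cover topology (the $2N$ alternating sectors), so that root extraction yields a genuine homeomorphism, and that the exponent equals the analytic vanishing order $N\ge n+1$. This is where the regularity of the coefficients and the quantitative similarity principle, or equivalently the Hartman--Wintner asymptotics $\Phi(z)\sim c\,(z-a)^N$ together with a $C^{1}$ straightening, must be invoked carefully. By comparison, the reduction to a linear elliptic equation in the first paragraph and the algebra of the final normalization are routine.
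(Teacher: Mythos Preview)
The paper does not prove this lemma at all: it is quoted as \cite[Lemma~1]{FinnOsserman1964} and used as a black box in the proof of Theorem~\ref{mainresult}. So the only comparison possible is with the original Finn--Osserman argument, and your outline is essentially that argument. They linearize exactly as you do---the difference of two solutions of the divergence-form quasilinear equation $\operatorname{div}\bigl(\nabla w/\sqrt{1+|\nabla w|^2}\bigr)=0$ satisfies a linear uniformly elliptic equation $\operatorname{div}(A\nabla\phi)=0$ with real-analytic $A$---and then appeal to Bers' local structure theory for pseudoanalytic functions, which is precisely the similarity principle/Sto\"ilow factorization you invoke. The conclusion they extract, and the only fact used downstream in the present paper, is the topological one: the set $\{\phi=0\}$ near $a$ consists of $N\ge n+1$ arcs meeting only at $a$ and cutting a neighbourhood into $2N$ sectors on which $\phi$ alternates sign.

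Two minor points worth tightening. First, you use real analyticity of $\phi_1,\phi_2$ to secure a \emph{finite} vanishing order $N$; this is correct for MSE solutions, but is not actually needed, since Carleman unique continuation for linear second-order elliptic operators already forbids a nontrivial $\phi$ from vanishing to infinite order. Second, the first-order Carleman--Bers--Vekua system is most cleanly written for the complex gradient $w=\phi_x-\imath\phi_y$, obtained directly from the second-order equation for $\phi$, rather than for $\Phi_z$; the similarity principle then gives $w=e^{s}F$ with $F$ holomorphic of order exactly $N-1$, after which either your Sto\"ilow route for $\Phi$ or a direct Hartman--Wintner expansion $\phi(z)=\Re\bigl(c(z-a)^N\bigr)+o(|z-a|^N)$ furnishes the homeomorphism $\zeta$. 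Your identification of this branched-cover normalization as the only genuinely delicate step is accurate; the rest is routine, and your sketch is correct.
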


\begin{proof}[Proof of Theorem~\ref{mainresult}]
Assume that $S:\zeta=\mathbf{f}(w)$ is a minimal surface above the unit disk with a zero-Gaussian curvature and horizontal tangent plane at the point above the center  and let $S^\diamond: \zeta=\mathbf{f}^\diamond(u,v)$ be the Scherk type minimal surface above the regular hexagon constructed in Example~\ref{ex:hexagon}. We assert that $|\mathcal{K}''(O)|<|\mathcal{K}_\diamond''(O)|$.
 Assume the converse $|\mathcal{K} ''(O)|\ge |\mathcal{K}_\diamond''(O)|$ and argue by a contradiction.  Then as in \cite{FinnOsserman1964}, by using the dilatation $L(\zeta) = \lambda \zeta$ for some $\lambda\ge 1$ we get the surface
$$S_\lambda=L(S)=\{(u,v,\lambda \mathbf{f}\left(\frac{u}{\lambda}, \frac{v}{\lambda}\right): |u+\imath v|<{\lambda}\},$$ whose Gaussian curvature
\begin{equation}\label{kk1}\mathcal{K}_\lambda(\mathbf{w})=\frac{1}{\lambda^2}\frac{ \left(\mathbf{f}_{uu}(u/\lambda,v/\lambda)\mathbf{f}_{vv}(u/\lambda,v/\lambda)-\mathbf{f}_{uv}(u/\lambda,v/\lambda)^2\right)}{(1+\mathbf{f}_u(u/\lambda,v/\lambda)^2
+\mathbf{f}_v(u/\lambda,v/\lambda)^2)^2}.\end{equation} Here and in the sequel $\mathbf{w}=(w,\mathbf{f}(w))$. Observe that such transformation does not change the gradient $\nabla \mathbf{f}(0)$. Moreover  $\mathcal{K}_\lambda(0)=0$. Since $$|{\mathcal{K}}''(O)|=\lim_{w\to 0}\frac{-\mathcal{K}(\mathbf{w})}{|w|^2}>0,$$ because of \eqref{kk1}, it exists $\lambda_\ast\ge 1$ so that \begin{equation}\label{kkad}\mathcal{K}_{\lambda_\ast}''(O)=\mathcal{K}_\diamond''(O).\end{equation}

 Let $$\mathbf{f}^\ast (u,v) =\lambda_\ast \mathbf{f}\left(\frac{u}{\lambda_\ast}, \frac{v}{\lambda_\ast}\right).$$

 Since $K_\ast(O)=\mathcal{K}_{\diamond}(O)=0$ and $\nabla \mathbf{f}^\ast(0) =0= \nabla \mathbf{f}^\diamond(0)$, it follows that all derivatives up to the order $2$ of $\mathbf{f}_\ast$ and $\mathbf{f}_\diamond$ vanishes at zero.

From \eqref{kkad} we obtain that \begin{equation}\label{combined}(\mathbf{f}^\ast_{uuu})^2+(\mathbf{f}^\ast_{vvv})^2=(\mathbf{f}^\diamond_{uuu})^2+(\mathbf{f}^\diamond_{vvv})^2.\end{equation}
We can also w.l.g. assume that $\mathbf{f}^\ast_{uuu} $ and $\mathbf{f}^\diamond_{uuu} $ as well as $\mathbf{f}^\ast_{vvv} $ and $\mathbf{f}^\diamond_{vvv} $ have the same sign. If not, then we choose $\lambda_\ast\le -1$ and repeat the previous procedure with $$S_1=L(S)=\{(u,v,\lambda \mathbf{f}\left(\frac{u}{\lambda}, \frac{v}{\lambda}\right): |u+\imath v|<{|\lambda|}\}.$$

From \eqref{h0}, \eqref{eq:uuu}, \eqref{eq:vvv} and \eqref{combined} we obtain that $\mathbf{f}^\ast_{uuu}=\mathbf{f}^\diamond_{uuu}$, $\mathbf{f}^\ast_{vvv}=\mathbf{f}^\diamond_{vvv}$, $\mathbf{f}^\ast_{uuv}=\mathbf{f}^\diamond_{uuv}$, $\mathbf{f}^\ast_{uvv}=\mathbf{f}^\diamond_{uvv}$.

Thus the function $F(u,v) = \mathbf{f}^\ast(u,v)- \mathbf{f}^\diamond(u,v)$ has all derivatives up to the order $3$ equal to zero in the point $w=0$.

By Lemma~\ref{mainlema}, for $F(w) = \mathbf{f}^\ast(w)-\mathbf{f}^\diamond(w)$ we are in situation of
 Lemma~\ref{lema2}, with $n = 3$. We conclude that there exists a homeomorphism
of a neighborhood of the origin onto a neighborhood of the origin
in the  $\zeta$ plane such that $F(z) = \Re \{\zeta(z)^N\}$ for $N \ge  4$. In particular, the level locus
$F=0$ in this neighborhood consists of $N$ arcs intersecting only at the origin
which divide the neighborhood into $2N$ sectors in which $F$ is alternately positive
and negative.

Let us now examine any component $R$ of the open set of points in the hexagon  $\mathcal{G}$ at
which $F\neq 0$. At a boundary point of $R$ which is interior to $\mathcal{G}$ we must have
$F= 0$. Thus the boundary of $R$ cannot consist entirely of intericr points of $\mathcal{G}$,
since for the difference of two solutions the maximum principle holds \cite[p.~322]{CourantHilbert1962}
and hence we would have $F = 0$ in $R$. Thus the boundary of $R$ must contain
at least one point of the boundary of $\mathcal{G}$. If it contains an inner point of a side
of $\mathcal{G}$ then it must contain all points on that side, since on each side $\mathbf{f}^\diamond\to  \pm \infty$
but $\mathbf{f}^\ast$ is bounded. Thus the set $F=0$ has at most eight  components whose
boundaries contain inner points of a side of $\mathcal{G}$. On the other hand, the set
$F\neq 0$ has at least 6 components, since this is true at the origin and otherwise
we could find an arc lying in a single component $R$ which joined two different
sectors at the origin. Then one of the other sectors at the origin would lie in a
component whose boundary was entirely interior to $\mathcal{G}$ which, as we have seen,
is impossible. We therefore conclude that some component $R$ of $F\ne 0$ has as
boundary points only interior points of $\mathcal{G}$ and one or more of the vertices of $\mathcal{G}$.
However, the  Finn's maximum principle applies also in this case (see \cite{FinnR1963}), and we would again find $F=0$ in $R$. Thus the assumption that the surface
$S^{\lambda_\ast}$ has the same second derivative of the Gaussian curvature  at the origin as Scherk's generalized surface $S^\diamond$ and can be extended
across the sides of $\mathcal{G}$ leads to a contradiction, and in view of \eqref{eq:ine},  inequality \eqref{interesting} is proved.

Prove the sharpness  part. A similar statement for the extremal Gaussian curvature at the centre for the case that the tangent plane is horizontal has been proved in \cite[Proposition~3]{FinnOsserman1964}. However that proof does not work in this case.
Assume that  $\omega= z^4$. Then $f$ defined in \eqref{hexa} is a solution of  Beltrami equation $\overline{f}_z=\omega f_z$ satisfying the initial conditions $f_z(0)>0$ and $f(0)=q(0)=q'(0)=0$. Further $f$ maps the unit disk onto the regular hexagon $\mathcal{G}$.

Furthermore, for $0<k<1$ assume that  $\omega_k=k^2 \omega.$  Then solve the second Beltrami equation $\overline{f}_z=\omega_k f_z$ that map the unit disk $\D$ onto itself  satisfying the initial condition $f(0) = 0$ and $f_z(0)>0$ \cite{HengartnerSchober1986}. This mapping exists and is unique \cite[p.~134]{Duren2004}. Then this mapping produces a minimal surface $S_k$ over the unit disk. Moreover for $k=n/(n+1),$ the sequence $f_n$ converges (up to some subsequence) in compacts of the unit disk, to a mapping $f^\circ$ that maps the unit disk into the unit disk. By using again the uniqueness theorems  \cite[Theorem~B\&~Theorem~1]{zbMATH05159460}, because $f^\circ(0)= f(0)$ and $f^\circ_z(0)>0$, it follows that $f^\circ \equiv f$.

Let $\mathbf{w}_n$ be the point above $0$ of minimal surface $S^n=S_{n/(n+1)}$. Then $f_n(0)=0$. Moreover the second derivative of the Gaussian curvatures $\mathcal{K}_n(\mathbf{w}_n)$ of $S^n$, in view of the formula \eqref{eq:curvatureformula}, is equal to \begin{equation}\label{eq:secderiv}\mathcal{K}_n''(\mathbf{w}_n) =\frac{4|q_n''(0)|^2}{|p_n(0)|^4(1+|q_n(0)|^2)^6}\end{equation} and converges to the second derivative of the  Gaussian curvature $\mathcal{K}''(\mathbf{w}).$ To prove the last fact, observe that $f_n$ and also $f$ are quasiconformal in a disk around $0$ and the family is normal. This is why $q_n$ and $p_n$ and $q_n''$ converges in compacts to the corresponding $q$, $p$ and $q''$. This implies that \eqref{interesting} cannot be improved

\end{proof}


{\bibliographystyle{abbrv} \bibliography{references}}


\vspace*{3mm}
\noindent David Kalaj

\noindent University of Montenegro, Faculty of Natural Sciences and Mathematics, 81000, Podgorica, Montenegro

\noindent e-mail: {\tt davidk@ucg.ac.me}

\end{document}